\newtheorem{theorem}{Theorem}[section]
\newtheorem{lemma}[theorem]{Lemma}
\newtheorem{proposition}[theorem]{Proposition}
\newtheorem{corollary}[theorem]{Corollary}
\theoremstyle{definition}
\newtheorem{definition}[theorem]{Definition}
\theoremstyle{remark}
\newtheorem{remark}[theorem]{Remark}
\newcommand{\pu}{\mathrm{PU}}
\newcommand{\C}{\mathbb{C}}
\newcommand{\reals}{\mathbb{R}}
\newcommand{\Heis}{\mathcal{H}}
\newcommand{\F}{\mathbb{F}}
\newcommand{\Proy}{\mathbb P_{\mathbb C}}
\newcommand{\Hip}{\mathbb{H}}
\newcommand{\Sp}{\mathbb{S}}
\newcommand{\vb}{\mathbf}
\begin{document}
	\title{Smooth knot limit sets of 
	the complex hyperbolic plane}
	
	\author{Waldemar Barrera}
	\address{ Facultad de Matem\'aticas, Universidad
		Aut\'onoma de Yucat\'an, Anillo Perif\'erico Norte Tablaje Cat
		13615, M\'erida, Yucat\'an, M\'exico. }
	\email{bvargas@correo.uady.mx}

	\author{Ren\'e Garc\'ia}
	\address { Facultad de Matem\'aticas, Universidad
		Aut\'onoma de Yucat\'an, Anillo Perif\'erico Norte Tablaje Cat
		13615, M\'erida, Yucat\'an, M\'exico. }
	\email{rene.garcia@correo.uady.mx}
	
	\author{Juan Pablo Navarrete}
	\address{ Facultad de Matem\'aticas, Universidad
		Aut\'onoma de Yucat\'an, Anillo Perif\'erico Norte Tablaje Cat
		13615, M\'erida, Yucat\'an, M\'exico. }
	\email{jp.navarrete@correo.uady.mx}
	
	\subjclass[2020]{12D10, 57M25, 51M10}
	
	\keywords{knots, chains, limitset, complexgeometry}

	\begin{abstract}
		It is shown that if a regular knot of class $C^2$ is embedded in the boundary of the complex hyperbolic plane as the limit set of a discrete subgroup of $\mathrm{PU}(2,1)$ then it is either a chain or an $\reals$-circle.
	\end{abstract}

	\maketitle 
	
    \section{Introduction}
	
	One of the most beautiful objects in mathematics is the three dimensional unitary sphere 
	$$\Sp^3 = \left\{(x_1, x_2, x_3, x_4) \in \mathbb{R}^4 \mid x_1^2+x_2^2+x_3^2+x_4^2= 1\right\}.$$

This set is in fact the ideal boundary of the real hyperbolic space of dimension 4. The  isometry group of this space is the  group of conformal transformations of $\Sp^3$, denoted by  $\textrm{Conf}(\Sp^3)$, which is identified with the Lorentz group $\textrm{O}(3,1)$ ( see \cite{Ka}, \cite{Ra}). Also, the space $\Sp^3$ can be considered as the boundary of complex hyperbolic space $\mathbb{H}^2_{\mathbb{C}}\subset \mathbb{C}^2$. 
The isometry group of $\mathbb{H}^2_{\mathbb{C}}$ is the group generated by
$\textrm{PU}(2,1)$ and the antiholomorphic involution $\rho_0(Z)=\bar{Z}$, where
$Z= (z_1, z_2)\in \mathbb{C}^2$ (see \cite{Go}).	 Both groups,
$\textrm{Conf}(\Sp^3)$ and
$\textrm{Isom}(\mathbb{H}^2_{\mathbb{C}})$ act by diffeomorphisms on $\Sp^3$, although   the action of  $\textrm{Isom}(\mathbb{H}^2_{\mathbb{C}})$ is not by conformal automorphisms,  but by contact morphisms, that means that the  elements of $\textrm{Isom}(\mathbb{H}^2_{\mathbb{C}})$ preserve the standard contact structure of $\Sp^3$~\cite{Go}. The discrete subgroups of 
$\textrm{Conf}(\Sp^3)$  and $\textrm{Isom}(\mathbb{H}^2_{\mathbb{C}})$ are considered higher Kleinian groups, for which there are many properties similar to classical Kleinian groups, i.e. discrete subgroups of $\textrm{PSL}(2,\C)$. Nevertheless there are several differences, which make them interesting objects to study, we recommend the following references about the subject: \cite{Ahl}, \cite{Go}, \cite{Ka}, \cite{Tu}.	

If $G$ is a discrete subgroup of  $\textrm{Conf}(\Sp^3)$  or $\textrm{PU}(2,1)$,  the limit set  $L(G)$ is defined as the set of   cluster points of the orbit of any point in $\Sp^3$. 
It is a well known fact that if $L(G)$ is a tame knot, then 
it is trivial~\cite{Ku}. 
In the case $G \subset \textrm{Conf}(\Sp^3)$, 
the result is stronger 
since 
$L(G)$ is  a round circle \cite{Ka}.
In this article we prove the following theorem.
\begin{theorem}\label{main}
Let $G \subset \textrm{PU}(2,1)$ be a discrete subgroup acting 
on the complex hyperbolic space $\Hip ^2 _{\C}$. 
If the limit set $L(G) \subset \partial \Hip ^2 _{\C}$ is the image of a $C^2$ regular knot, $\gamma:\Sp^1 \to \partial\Hip^2_\C$,  then it is the boundary 
of a totally geodesic subspace of $\Hip^2_\C$, in other words it is either a 
chain or an $\reals$-circle.
\end{theorem}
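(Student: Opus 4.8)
The plan is to use the fact that $\Lambda := L(G)$, being an infinite closed $G$-invariant set, is minimal (every $G$-orbit is dense in it) and equals the closure of the set of fixed points of loxodromic elements. Fix a $C^2$ embedding $\gamma \colon \Sp^1 \to \Lambda$. First I would establish a dichotomy: let $\mathcal T \subseteq \Lambda$ be the set of points where $\gamma$ is tangent to the contact distribution of $\partial\Hip^2_\C$. Since $\gamma$ is $C^1$, the distribution is continuous, and $\mathrm{PU}(2,1)$ preserves it, the set $\mathcal T$ is closed and $G$-invariant; by minimality $\mathcal T = \emptyset$ or $\mathcal T = \Lambda$. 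Hence $\gamma$ is everywhere transverse to the contact distribution, or everywhere tangent to it (a Legendrian curve). These two cases should produce a chain and an $\reals$-circle respectively.

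In the transverse case, to each $p \in \Lambda$ there is attached a unique complex geodesic $V_p \subset \Hip^2_\C$ whose boundary chain passes through $p$ with tangent line the complex direction of $\gamma'(p)$ — namely the projective line through $p$ spanned by that direction. The assignment $\phi \colon p \mapsto V_p$, viewed as a map $\Lambda \to \Proy^2$ via the polar point of $V_p$, is continuous (it depends only on the $1$-jet of $\gamma$) and $G$-equivariant. Now take a loxodromic $g \in G$ with attracting and repelling fixed points $p^+, p^- \in \Lambda$. Since such a $g$ is diagonalizable with three simple eigenvalues, it has a unique invariant complex geodesic $L_g$, and $L_g$ contains both $p^+$ and $p^-$; by equivariance $\phi(p^+) = L_g$. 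Using continuity of $\phi$ and the North–South dynamics of $g$ on $\Proy^2$ — the attracting fixed point lies on the null cone, so a $g$-orbit of positive points either leaves every compact set of the positive locus or lies on the $g$-invariant projective line through $p^-$ (the polar line of $p^-$) — one concludes that $\phi(\Lambda)$ lies in the polar line of $p^-$, i.e.\ every osculating chain $\partial V_p$ passes through $p^-$; applying this to $g^{-1}$ gives $p^+ \in \partial V_p$ as well. Since loxodromic fixed points are dense in $\Lambda$, it follows that $\Lambda \subseteq \partial V_p$ for every $p$, hence $\Lambda = \partial V_{p_0}$ is a single chain, and $G$ preserves the totally geodesic complex geodesic $V_{p_0}$.

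In the Legendrian case, no element of $G$ can be a regular loxodromic: at a fixed point of a regular loxodromic the differential acts on the contact plane as a non-trivial complex rotation–scaling, which fixes no real direction, whereas $\gamma'$ at that point must be a fixed real direction. One then runs the analogue of the previous step using the second-order jet of $\gamma$ — and this is exactly where the hypothesis $C^2$ is needed, since the $1$-jet of a Legendrian curve singles out no non-degenerate totally geodesic object. From $\gamma,\gamma',\gamma''$ one attaches to each $p$ a canonical osculating $\reals$-plane $S_p$ (equivalently, one controls the relevant second-order CR-invariant of $\gamma$ along $\Lambda$), obtaining a continuous $G$-equivariant map $\psi \colon p \mapsto S_p$. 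For a loxodromic $g \in G$, now necessarily without rotational part, the same dynamical mechanism forces $p^\pm \in \partial S_p$ for all $p$: an $\reals$-plane whose boundary circle avoids the repelling point $p^-$ has its $g$-orbit collapse to the single point $p^+$, contradicting convergence of $\psi(g^n p)$. Density of loxodromic fixed points then yields $\Lambda \subseteq \partial S_{p_0}$, an $\reals$-circle, so $\Lambda$ is an $\reals$-circle bounding a totally geodesic $\Hip^2_\reals$.

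The hard part will be the Legendrian case: making precise the canonical osculating $\reals$-plane (or the correct second-order CR-invariant) carried by a $C^2$ Legendrian curve, and verifying both that it transforms correctly under $G$ and that it interacts with the loxodromic dynamics as claimed — together with the supporting structural facts about invariant complex geodesics and invariant $\reals$-planes of loxodromic isometries, and the classification of the possible limiting behaviour in the space of $\reals$-planes. The transverse case, by contrast, is essentially forced once one records that a loxodromic isometry of $\Hip^2_\C$ admits a unique invariant complex line, through both of its fixed points.
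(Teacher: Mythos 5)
Your dichotomy (the set of Legendrian points of $\gamma$ is closed and $G$-invariant, hence empty or everything) and your treatment of the transverse case are essentially sound, and close in spirit to the paper: your map $\phi$ is exactly the paper's tangent chain $\gamma(t)\boxtimes\gamma'(t)$ (Propositions~\ref{invariancebypu21} and~\ref{prop:legendrianknot}), and your North--South argument on $\Proy^2$ is a clean variant of the paper's shrinking-chain contradiction in Lemma~\ref{itislegendrian}. The observation that in the Legendrian case every loxodromic element of $G$ must have trivial rotational part is also correct.

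The genuine gap is the Legendrian case, which you yourself flag as the ``hard part'' and leave unconstructed: there is no canonical osculating $\reals$-circle (or $\reals$-plane) attached to the $2$-jet of a $C^2$ Legendrian curve, so the proposed equivariant map $\psi\colon p\mapsto S_p$ does not exist at this level of regularity. A dimension count makes this concrete: $\reals$-circles form a $5$-dimensional family ($\dim \mathrm{PU}(2,1)=8$ minus the $3$-dimensional stabilizer $\mathrm{PO}(2,1)$), so pointed $\reals$-circles form a $6$-dimensional family, whereas unparametrized Legendrian $2$-jets form a $5$-dimensional space (point, Legendrian direction, one second-order parameter, since the contact condition determines the vertical part of $\gamma''$). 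Hence the $2$-jet map from pointed $\reals$-circles has positive-dimensional fibers: generically a whole $1$-parameter family of $\reals$-circles (lifts of lemniscates matching the projected point, tangent and curvature) realizes the same Legendrian $2$-jet, and correspondingly there is no nonconstant second-order $\mathrm{PU}(2,1)$-invariant of Legendrian curves to ``control''; the first CR-type invariant appears at third order, which a $C^2$ curve does not possess. So the analogue of the tangent-chain construction, which is what drives your transverse case, is exactly what is unavailable here, and the strategy breaks down at the step where the $C^2$ hypothesis must be exploited. The paper's actual route avoids any osculating object: it fixes a single loxodromic $g\in G$, chooses Heisenberg coordinates with its fixed points at the origin and at infinity, and takes the vertical projection of $\gamma$ near the origin; the projection is an embedded $C^2$ plane curve invariant under $z\mapsto\lambda^{1/2}z$, and the elementary curvature-scaling argument of Lemma~\ref{lem:embedded-plane-curve} (curvature would blow up at the origin unless it vanishes) forces the projection to be a straight segment, so the Legendrian curve is a horizontal segment near the fixed point, and $g$-invariance spreads it to an infinite $\reals$-circle (Lemma~\ref{lem:r-circle}); note this also disposes of rotational parts automatically. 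To complete your proposal along your own lines you would have to either produce the missing canonical second-order object and its compactified space with the claimed dynamics, or, more realistically, replace that step by a local argument at a loxodromic fixed point of the kind the paper uses.
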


It is a well-known fact that the groups $\textrm{PU}(1,1)$ and $\textrm{PO}(2,1)$ 
can be embedded in $\textrm{PU}(2,1)$ preserving the vertical chain and the canonical $\mathbb{R}$-circle, respectively, 
see \cite{Go}. Also, any chain and any $\mathbb{R}$-circle can be obtained as the limit set of a conjugate subgroup of these embedded groups. Moreover, we have the following straightforward consequence of the Theorem \ref{main}:

\begin{corollary}\label{cor} If $G$ is a discrete subgroup of $\textrm{PU}(2,1)$ and the limit set $L(G)$ is the image of a 
class $C^2$ regular knot then $G$ is conjugate to a discrete subgroup
of $\textrm{PU}(1,1)$ or $\textrm{PO}(2,1)$.
\end{corollary}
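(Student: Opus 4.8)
The plan is to read off the corollary from Theorem~\ref{main} together with two standard facts about $\Hip^2_\C$ (see \cite{Go}): a chain is the ideal boundary of a \emph{unique} complex geodesic, and an $\reals$-circle is the ideal boundary of a unique totally geodesic $\reals$-plane; moreover $\textrm{PU}(2,1)$ acts transitively on complex geodesics and on $\reals$-planes, the stabilizer of a complex geodesic being the copy of $\textrm{PU}(1,1)$, and the stabilizer of an $\reals$-plane the copy of $\textrm{PO}(2,1)$, that are singled out just before the statement.

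First I would invoke the $G$-invariance of the limit set: each $g\in G$ extends to a homeomorphism of $\overline{\Hip^2_\C}$ permuting any orbit $G\cdot o$, hence fixing its set of accumulation points, so $g\,L(G)=L(G)$. By Theorem~\ref{main} the hypothesis forces $L(G)$ to be a chain $C$ or an $\reals$-circle $R$; accordingly $L(G)=\partial\Sigma$ for some complex geodesic $\Sigma$, or $L(G)=\partial E$ for some $\reals$-plane $E$.

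The step carrying the geometric content is then to upgrade $G$-invariance of $L(G)$ to $G$-invariance of the totally geodesic subspace it bounds. Any $g\in G$ carries complex geodesics to complex geodesics and $\reals$-planes to $\reals$-planes while fixing $\partial\Sigma=L(G)$ (resp.\ $\partial E=L(G)$); hence $g\Sigma$ (resp.\ $gE$) is a totally geodesic subspace of the same type with the same ideal boundary, so $g\Sigma=\Sigma$ (resp.\ $gE=E$) by the uniqueness recalled above. Thus $G$ stabilizes $\Sigma$, or $E$, as the case may be.

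To conclude, transitivity of $\textrm{PU}(2,1)$ on complex geodesics (resp.\ on $\reals$-planes) lets me conjugate $G$ inside $\textrm{PU}(2,1)$ so that $\Sigma$ becomes the complex geodesic bounded by the vertical chain (resp.\ $E$ becomes the $\reals$-plane bounded by the canonical $\reals$-circle). The conjugated group then sits inside the stabilizer of that subspace, which is the embedded $\textrm{PU}(1,1)$ (resp.\ $\textrm{PO}(2,1)$), and it remains discrete because $G$ was discrete in $\textrm{PU}(2,1)$. I expect the one genuinely delicate point to be the uniqueness invoked above --- that a chain, respectively an $\reals$-circle, bounds exactly one totally geodesic subspace of the relevant type, so that $G$ has no option but to preserve it; I would prove this directly from the description of these subspaces in projective coordinates in \cite{Go}. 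By contrast, the passage from ``$G$ preserves the subspace'' to ``$G$ is conjugate into $\textrm{PU}(1,1)$ or $\textrm{PO}(2,1)$'' is purely a matter of unwinding the embeddings recalled before the statement, and is routine --- with the mild caveat that for a complex geodesic the stabilizer also contains the circle of rotations about it, so in that case one should keep track of which subgroup is intended.
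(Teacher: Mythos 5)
Your argument is correct and is essentially the proof the paper intends: the paper gives no separate proof of the corollary, only the remarks preceding it about the embedded $\textrm{PU}(1,1)$ and $\textrm{PO}(2,1)$ preserving the vertical chain and the canonical $\reals$-circle, and your write-up ($G$-invariance of $L(G)$, Theorem~\ref{main}, uniqueness of the totally geodesic subspace bounded by a chain or an $\reals$-circle, and conjugation into its stabilizer) supplies exactly the details being left implicit. The caveat you flag, that the full stabilizer of a complex geodesic is $\mathrm{P}(\mathrm{U}(1,1)\times\mathrm{U}(1))$ rather than a literal copy of $\textrm{PU}(1,1)$, is an imprecision inherited from the paper's own phrasing of the embedded group, not a gap in your argument.
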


In~\cite{yue}, Yue proves a more general result for cocompact 
lattices of $\mathrm{SO}(n,1)$ and $\mathrm{SU}(n,1)$, 
$n \geq 2$. 
Roughly speaking, he proves that for any cocompact lattice $G_0$ of $\textrm{SO}(n,1)$ or $\mathrm{SU}(n,1)$ and any injective 
representation of $G_0$ whose image is denoted by $G$, the Hausdorff dimension of the limit 
set $L(G)$ is bounded below by the Hausdorff dimension of the limit 
set $L(G_0)$ with equality if and only if $G$ stabilizes 
a totally geodesic copy of $\Hip^n_\reals$ or $\Hip^n_\C$ respectively. The Theorems~1.3 and~1.4 of Yue imply the Theorem~\ref{main} 
and the Corollary~\ref{cor} with the additional hypothesis that  the 
group $G$ acts 
convex cocompactly. However, this additional hypothesis is not necessary 
because the condition that the limit set $L(G)$ is of class $C^2$ 
implies that the Hausdorff dimension of the limit set is 1  and allows to use techniques of 
classical differential geometry to prove the 
Theorem~\ref{main}.

This article is organized as follows: In Section~\ref{sec:preliminares} we outline the required preliminaries about complex hyperbolic geometry. In Section~\ref{sec:real-circles} the notion of $\reals$-circle is introduced together with basic notions of contact geometry. In Section~\ref{sec:chains} the notions of chain and tangent chain to a smooth curve in a point are introduced. In Proposition~\ref{invariancebypu21} we show that tangent chains to curves are mapped by elements of $\mathrm{PU}(2,1)$ to tangent chains to the corresponding mapped curves. In Proposition~\ref{prop:legendrianknot} it is shown that a knot is Legendrian at a point if and only if the tangent chain at the point is degenerate. In Section~\ref{sec:proof} we prove the Theorem~\ref{main}, the idea is to prove, using the dynamics of a loxodromic element, that if the limit set of a discrete subgroup of $\mathrm{PU}(2,1)$ is a knot then it is either a chain or a Legendrian knot.  Finally, if the limit set is a Legendrian knot,   
with help of a suitable Heisenberg coordinate system, we take the vertical projection to the horizontal plane and prove that the projection is an infinite $\reals$-circle. 

\section{Preliminaries}\label{sec:preliminares}
\subsection{Projective geometry}
We denote by $\mathbb{F}$ the field $\mathbb{C}$ or $\mathbb{R}$,
the projective space $\mathbb{P}^2_{\mathbb{F}}$ is defined
as
$$\mathbb{P}^2_\Bbb{F}:=(\mathbb{F}^{3}\setminus \{\mathbf{0}\})/\mathbb{F}^*,$$
 where $\mathbb{F}^* =\mathbb{F}\setminus \{0\}$ acts on 
 $\mathbb{F}^3\setminus\{\mathbf{0}\}$ by the usual scalar multiplication. 
The space  $\mathbb{P}^2_{\mathbb{F}}$ is called the complex  
projective plane when $\mathbb{F}=\C$, and it is called the real projective plane 
when  $ \mathbb{F}=\mathbb{R}$.
Let $[\mbox{ }]:\mathbb{F}^{3}\setminus\{\mathbf{0}\}\rightarrow
\mathbb{P}^{2}_{\mathbb{F}}$ be   the quotient map, if
$\mathbf{v}=(x,y,z)\in \mathbb{F}^3\setminus\{\mathbf{0}\}$ then we write
$[\mathbf{v}]=[x:y:z]$.  
A set of the form
$$\ell= \{[x:y:z] \in \mathbb{P}^2 _{\mathbb{F}} : A x+ By+Cz=0\}$$ for some $A,B,C \in
\mathbb{F}$ not all zero is called an $\mathbb{F}$-line (or line when there is no confusion).
We notice that any two distinct points $p, q \in \mathbb{P} ^2 _{\mathbb{F}}$ define a
unique $\mathbb{F}$-line containing $p$ and  $q$. 
Consider the action of $\F ^*= \F \setminus\{0\}$  on
$\textrm{GL}(3,\F)$ given by the usual scalar
multiplication, then
$$\textrm{PGL}(3,\F)=\textrm{GL}(3,\F )/\F ^* $$ is a Lie group
whose elements are called projective transformations.  Let $[\mbox{
}]:\textrm{GL}(3,\mathbb{F})\rightarrow \textrm{PGL}(3,\F)$
be   the quotient map. If  $g\in \textrm{PGL}(3,\F)$ and
$\mathbf{g} \in \textrm{GL}(3,\F)$, we say that $\mathbf{g}$
is a lift of $g$ whenever $[\mathbf{g}]=g$. One can show that
$\textrm{PGL}(3,\F)$ is a Lie group  that acts
transitively, effectively and by diffeomorphisms on
$\mathbb{P}^2_{\F}$ by
$[\mathbf{g}]([\mathbf{v}])=[\mathbf{g}\mathbf{v}]$, where
$\mathbf{v}\in \mathbb{F}^3\setminus\{\mathbf{0}\}$ and $\mathbf{g}\in
\textrm{GL}(3, \F)$~\cite{Go}. Notice that any element $g \in
\textrm{PGL}(3, \F)$ maps $\F$-lines to $\F$-lines. 
The real projective plane $\mathbb{P}^2 _{\mathbb{R}}$ can be embedded in a natural way in
the complex projective plane $\Proy ^2$, in the following way:
$$\mathbb{P}^2 _{\mathbb{R}}  \hookrightarrow \Proy ^2$$
$$[x:y:z] \mapsto [x:y:z].$$
In what follows, $\mathbb{P}^2 _{\mathbb{R}}$ denotes the image of this embedding.

\subsection{Complex hyperbolic geometry}\label{chg}

Let $\C ^{2,1}$ denote $\C ^3$ equipped with the Hermitian form
$$\langle \mathbf{z}, \mathbf{w}\rangle _1=
z_1 \overline{w}_1+z_2\overline{w}_2-z_3\overline{w}_3,$$ where
$\mathbf{z}=(z_1,z_2,z_3)$, $\mathbf{w}=(w_1,w_2,w_3)$. 
Denote by
$$V_{-}=\{\mathbf{z} \in \C ^{2,1} : \langle \mathbf{z}, \mathbf{z} \rangle _1<0\},$$
$$V_0=\{\mathbf{z} \in \C ^{2,1} \setminus \{\mathbf{0}\} : \langle\mathbf{z}, \mathbf{z}\rangle _1=0\},$$
$$V_{+}=\{\mathbf{z} \in \C ^{2,1} : \langle\mathbf{z}, \mathbf{z} \rangle _1>0\},$$
the sets of negative, null and positive vectors in $\C ^{2,1}
\setminus \{\mathbf{0}\}$, respectively.
The projectivization of the set of negative vectors,
\begin{eqnarray*}
[V_{-}] =  \{ [z_1:z_2:1] \in \Proy ^2 : |z_1|^2+|z_2|^2<1 \},
\end{eqnarray*}
 is a complex $2$-dimensional open ball in $\Proy^2$. Moreover, $[V_{-}]$ equipped
 with the quadratic form induced by the Hermitian form $\langle \cdot, \cdot\rangle _1$ is a
 model for the complex hyperbolic space $\Hip ^2 _{\C}$.
 The projectivization of the set of null vectors,
\begin{eqnarray*}
[V_{0}] = \{ [z_1:z_2:1] \in \Proy ^2 : |z_1|^2+|z_2|^2 = 1 \},
\end{eqnarray*}
is a $3$-sphere in $\Proy^2$ and it is the boundary of $\Hip ^2
_{\C}$, denoted $\partial \Hip ^2 _{\C}$. 
Finally, the projectivization of the set of positive vectors,
\begin{eqnarray*}
[V_{+}] & = & \{[z_1:z_2:z_3] \in \Proy^2 :
|z_1|^2+|z_2|^2-|z_3|^2>0\},
\end{eqnarray*}
is the complement in $\Proy^2$ of the complex $2$-dimensional closed
ball $\overline{\Hip^2 _{\C}}=\Hip^2 _{\C} \cup \partial \Hip^2
_{\C}$.

The group of holomorphic isometries of $\Hip ^2 _{\C}$ is
$\textrm{PU}(2,1)$, the projectivization in $\textrm{PGL}(3,\C)$ of
the unitary group, $\textrm{U}(2,1)$, respect to the Hermitian form $\langle \cdot , \cdot \rangle _1$:
$$
\textrm{U}(2,1) = \{ \mathbf{g} \in \textrm{GL}(3, \Bbb{C}): \langle \mathbf{g}
\mathbf{z}, \mathbf{g} \mathbf{w}\rangle _1=\langle\mathbf{z}, \mathbf{w}\rangle _1 \}.
$$
The group $\textrm{PU}(2,1)$ acts transitively in $\Hip ^2 _{\C}$
and by diffeomorphisms in the boundary $\partial \Hip ^2 _{\C} \cong
\Sp^3$. 
Another fact we
use along this paper is the following: Given any point
$p=[w_1:w_2:w_3] \in
\partial \Hip ^2 _{\C}$, there exists a unique complex line,
denoted $\ell _p$, tangent to $\partial \Hip ^2 _{\C}$ at $p$.
Moreover, $\ell _p$ is  the set:
$$\{[z_1: z_2: z_3] \in \Proy^2 : z_1 \overline{w}_1 +
z_2 \overline{w}_2-z_3 \overline{w}_3=0\}.$$

If we consider $\Bbb{C}^3$ with the Hermitian form
$$\langle \mathbf{z},\mathbf{w}\rangle _2=z_1 \overline{w}_3+z_2 \overline{w}_2+z_3 \overline{w}_1,$$
where $\mathbf{z}=(z_1, z_2, z_3)$ and $\mathbf{w}=(w_1, w_2, w_3)$,
then we have that
$$\langle C \mathbf{z}, C \mathbf{w}\rangle _1=\langle \mathbf{z}, \mathbf{w} \rangle _2,$$
where $C$ is the Cayley matrix:
\begin{displaymath}
\frac{1}{\sqrt{2}}\left(
\begin{array}{ccc}
1 & 0 & 1 \\
0 & \sqrt{2} & 0 \\
1 & 0 & -1
\end{array}
\right).
\end{displaymath}
Hence $C(V_{-}), C(V_0), C(V_+)$ are the sets of negative, null and
positive vectors for $\langle \cdot , \cdot \rangle _2$, respectively. The projectivization of
$C(V_{-})$ equipped with the Hermitian form $\langle \cdot , \cdot \rangle _2$ is the Siegel
model for complex hyperbolic space
$$ \{[z_1: z_2 : 1] \in \Proy ^2 : 2 \Re(z_1)+|z_2|^2<0\} $$
and its boundary is the set
$$\{[z_1:z_2:1] \in \Proy^2 : 2 \Re(z_1)+|z_2|^2=0\} \cup\{[1:0:0]\}.$$
Any finite point in this boundary, can be written in the form
$$[-|\zeta|^2+i v:\sqrt{2} \zeta: 1], $$
for some $(\zeta,v) \in \Bbb{C}\times \Bbb{R}$. Hence there is a
natural identification of this boundary set with the one point
compactification of the Heisenberg space $\mathcal{H}=\C \times
\Bbb{R}$. For more details on
complex hyperbolic geometry, see the book \cite{Go}.

\subsection{The limit set}

\begin{definition}If $G$ is a discrete subgroup of $\textrm{PU}(2,1)$,
the \emph{limit set} of $G$, denoted by $L(G)$, is defined as the set of cluster points of
the $G$-orbit of any point in ${\Hip ^2 _{\C}}$. 
\end{definition}

Some useful properties of the limit set are the following:
\begin{enumerate}
\item The limit $L(G)$ does not depend on the choice of the point in ${\Hip ^2 _{\C}}$.

\item The limit set $L(G)$ is a closed $G$-invariant set and it is minimal on the family of closed $G$-invariant sets
with more than two points. 

\item If $L(G)$ has more than two points and 
 $x \in L(G)$ is any point, 
then the $G$-orbit of $x$ is dense in $L(G)$.

\item If $L(G)$ contains at least two points then there exists a loxodromic element
in $G$. A loxodromic element is any element in $\textrm{PU}(2,1)$ 
with precisely two fixed points in $\partial \Hip ^2 _{\C}$, 
one 
is \emph{attracting} and the other 
is \emph{repelling}.
\end{enumerate}

For more details see \cite{ChG,  kamiya, Ka}.
\subsection{The Hermitian Cross Product}
If $\mathbf{z}, \mathbf{w} \in \C ^{2,1}$ then the Hermitian cross product of
$\mathbf{z}=(z_1, z_2, z_3)$ and $\mathbf{w}=(w_1, w_2, w_3)$, 
with respect to the Hermitian form
$\langle \cdot , \cdot \rangle _1 $, 
is defined as 
\begin{displaymath}
\mathbf{z} \boxtimes _1 \mathbf{w} =\left| 
\begin{array}{ccc}
\mathbf{i} & \mathbf{j} & \mathbf{k} \\
\overline{z_1} & \overline{z_2} & -\overline{z_3} \\
\overline{w_1} & \overline{w_2} & -\overline{w_3}
\end{array}
\right|.
\end{displaymath} 
Analogously, the Hermitian cross product, respect to the Hermitian form
$\langle \cdot , \cdot \rangle _2 $ is defined as
\begin{displaymath}
\mathbf{z} \boxtimes_2 \mathbf{w} =\left| 
\begin{array}{ccc}
\mathbf{i} & \mathbf{j} & \mathbf{k} \\
\overline{z_3} & \overline{z_2} & \overline{z_1} \\
\overline{w_3} & \overline{w_2} & \overline{w_1}
\end{array}
\right|.
\end{displaymath} 
When there is no danger of confusion, we will omit the subscripts on the notation of these Hermitian cross products. 
These products share several similarities to the real cross product in $\reals^3$, in particular:
\begin{enumerate}
    \item $\mathbf{z}\boxtimes \mathbf{w} \neq \mathbf{0}$ if and only if $\mathbf{z}$ and $\mathbf{w}$ are linearly independent.
    \item $\mathbf{z}\boxtimes\mathbf{w}$ is orthogonal with respect to the corresponding Hermitian form to both $\mathbf{z}$ and $\mathbf{w}$.
    \item It is skew linear and  anti-commutative.
    \item For any $\mathbf{g} \in \mathrm{SU}(2,1)$, $\mathbf{g}\,(\mathbf{z}\boxtimes \mathbf{w})=\mathbf{g}(\mathbf{z})\boxtimes \mathbf{g}(\mathbf{w})$. 
\end{enumerate}

\begin{definition}
If $z=[\mathbf{z}]$ and $w=[\mathbf{w}]$ are two distinct points in $\Proy ^2$ then 
$\mathbf{z} \boxtimes \mathbf{w} \ne \mathbf{0}$ and
$$z\boxtimes w =[\mathbf{z} \boxtimes \mathbf{w}]$$
is a well-defined point in $\Proy ^2$.
\end{definition}

\section{$\reals$-circles}\label{sec:real-circles}
The boundaries of totally geodesic subspaces of $\Hip^2_\C$ which are  isometric to the Beltrami model of real hyperbolic geometry are called  $\reals$-circles.
\begin{definition}
The canonical $\reals$-circle $R_0$ is defined as $\mathbb{P}^2_\reals \cap \partial\Hip^2_\C$, any other $\reals$-circle is the translate of $R_0$ by an element of $\textrm{PU}(2,1)$.
\end{definition}

In Heisenberg coordinates, the $\reals$-circles are infinite or finite according to whether they pass through the point at infinity or not. All the infinite $\reals$-circles passing through the origin are horizontal lines $(te^{i\theta},0)$, $t \in \reals$ and $\theta \in [0,2\pi]$, any other infinite $\reals$-circle can be obtained by Heisenberg translations, thus, the infinite $\reals$-circles are simply straight lines. 
Likewise, finite $\reals$-circles are obtained by complex dilations, rotations and Heisenberg translations of the finite $\reals$-circle 
\begin{align*}
    \left\{\left(i\sqrt{\cos(2\theta)}\,e^{i\theta}, -\sin(2\theta)\right) \mid \theta \in [-\pi/4, \pi/4) \cup (3\pi/4, 5\pi/4] \right\}.
\end{align*}
More details can be found in~\cite{parker}.

\subsection{ Contact geometry}
Let $E \subset TM$ be a hyperplane field of real codimension 1 on a
smooth manifold $M$,
if there is a 1-form  $\omega : TM \to \reals$ such that for any $x \in M$, 
$E_x$ is equal to $\ker(\omega_x)$, then we call $\omega$ a contact form 
provided  $d\omega_x$ is nondegenerate for all $x \in M$.  
Moreover, the hyperplane field $E$ is called a contact structure. A curve such
that each tangent vector is in $E$ is called \emph{Legendrian}, we will focus
on the three dimensional Heisenberg space with
coordinates $(\zeta,v) \in \mathcal{H}$ and the group
structure determined by the product
\begin{align*}
 (\zeta_1,v_1)\cdot(\zeta_2,v_2) = (\zeta_1 + \zeta_2, v_1 +
\eta(\zeta_1, \zeta_2) + v_2),
\end{align*}
where  $\eta(\zeta_1,\zeta_2) = \textrm{Im}(\overline{\zeta_1}\,\zeta_2)$ is the
symplectic form determined by the complex structure of $\C$. In Heisenberg
space the form $\omega = dv - \eta(\zeta,d\zeta)$ induces the contact structure,
hence if $\gamma: \Sp^1 \to \mathcal{H}$ is a Legendrian knot with
parametrization $\gamma = (\zeta, v)$, the condition that
$\dot \gamma$ is everywhere in $\ker\omega$ is equivalent to the equation
\begin{align*}
  \dot v - \eta(\zeta, \dot\zeta) = 0,
\end{align*}
showing that Legendrian knots are determined by their vertical projections $\gamma \mapsto \zeta$. In particular, if $v(s)$ is constant, this implies that $\gamma(s)$ is a straight line.

 The $\reals$-circles are Legendrian knots: First, 
 the canonical $\reals$-circle in Heisenberg coordinates is $(t,0)$, $t \in \reals$ which is 
 a Legendrian subspace. Since $\textrm{PU}(2,1)$ preserves the contact structure, 
 any other $\reals$-circle  
 is everywhere tangent to the contact structure as well. 

 \section{Chains}\label{sec:chains}
Complex geodesics 
are the totally geodesic subspaces obtained as the intersection of a complex line and $\Hip
^2 _{\C}$. The boundary at infinity of a complex
 geodesic is a circle obtained as the intersection of $\partial \Hip ^2
 _{\C}$ and a complex line, these circles are called \emph{chains}. 

If $p=[\mathbf{v}] \in \Proy^2 \setminus \overline{\Hip ^2 _{\C}}$ then
$\mathbf{v}$ is a positive vector, so the orthogonal complement 
$\langle \mathbf{v} \rangle ^{\perp}$ respect to the first Hermitian form, is a two dimensional subspace of
$\Bbb{C}^{2,1}$ and it induces a complex line, $\ell _p$, called the
\emph{polar line to} $p$. The chain, $\mathcal{C}_p$, obtained as the
intersection $\ell _p \cap \partial \Hip ^2 _{\C}$ is \emph{the
polar chain to} $p$.
The points in $\partial \Hip
^2 _{\C}$ are considered as chains and we call them \emph{degenerate
chains}. 
Conversely, if $\ell$ is a complex line transversal to $\partial
\Hip ^2 _{\C}$, then we can write $\ell=[L\setminus\{\mathbf{0}\}]$
where $L$ is a two dimensional complex vector subspace of $\C
^{2,1}$. Moreover, the orthogonal complement of $L$, respect to the
Hermitian form $\langle \cdot, \cdot \rangle_1$, is a one dimensional
complex subspace of $\C ^{2,1}$ which induces a point in $\Proy ^2
\setminus \overline{\Hip ^2 _{\C}}$, this point is called the
\emph{polar point to} the line $\ell$.

There is a natural identification of a chain $\partial \Hip ^2 _{\C}
\cap \ell$ with the polar point to $\ell$. In fact, there is a
bijection,  
between the space of chains and the complement of the
complex hyperbolic space $\Proy ^2 \setminus \Hip ^2 _{\C}$. We remark that for a degenerate chain
$\{p\}=\partial \Hip ^2 _{\C} \cap \ell$ the corresponding point is
$p \in
\partial \Hip ^2 _{\C}$.

\subsection{Tangent chains}
Let $\gamma : \Sp ^1 \to \partial \Hip ^2 _{\C}$ be a $C^1$ regular knot which we identify with 
the curve
$$\gamma: [0,1] \to \partial \Hip ^2 _{\C},$$
$$t \mapsto \gamma(e^{2\pi i t}).$$
If $t_0,t \in [0,1)$ are distinct then
$\gamma(t_0) \ne \gamma(t)$
and the polar chain to the point
$\gamma(t_0) \boxtimes \gamma(t)$ is the unique chain passing through the points
$\gamma(t_0)$ and $\gamma(t)$.  
We notice that as $t \to t _0$, the chain passing through $\gamma (t _0)$ and $\gamma(t)$
goes to the tangent chain to $\gamma$ at $\gamma(t _0)$. However, this tangent chain
cannot be defined as the polar chain to 
$\gamma(t _0) \boxtimes \gamma(t _0)$ because the Hermitian cross product of a
vector with itself is equal to the zero vector.

In order to define the tangent chain to $\gamma$ at $t_0$, we use the following notation:
$\mathbf{v}(t) \in \C ^{2,1}\setminus \{\mathbf{0}\}$ is a vector satisfying
$\gamma(t)=[\mathbf{v}(t)]$. Moreover, we can assume that the curve
$$\mathbf{v}: [0,1] \to \C ^{2,1}\setminus \{ \mathbf{0}\}$$
is of class $C^1$.
Now, the chain passing through the points
$\gamma(t_0)=[\mathbf{v}(t_0)]$ and $\gamma(t)=[\mathbf{v}(t)]$ is the polar chain to the point
$$[\mathbf{v}(t_0) \boxtimes \mathbf{v}(t)]=
\left[\mathbf{v}(t_0) \boxtimes \left(\frac{\mathbf{v}(t)-\mathbf{v}(t_0)}{t-t_0}\right)\right].$$
Since $$\lim _{t \to t_0} \mathbf{v}(t_0) \boxtimes \left(\frac{\mathbf{v}(t)-\mathbf{v}(t_0)}{t-t_0}\right)
= \mathbf{v}(t_0) \boxtimes \mathbf{v}'(t_0),$$
we have the following definition (see \cite{Miner}):
\begin{definition}
We define the \emph{tangent chain} to $\gamma$ at $\gamma(t_0)$ as the polar chain to the point
$$\gamma(t_0)\boxtimes \gamma '(t_0)=[\mathbf{v}(t_0) \boxtimes \mathbf{v}'(t_0)] \in 
\Proy ^2 \setminus \Hip ^2 _{\C}.$$
The tangent chain to $\gamma$ at $t_0$ is denoted by $\mathcal{T}_{\gamma}(t_0)$.
\end{definition}

\begin{remark}\label{taniswelldef}
The tangent chain $\mathcal{T}_{\gamma}(t_0)$ does not depend of the lifted curve $\mathbf{v}:[0,1]\to \C ^{2,1}$ or
parametrization.
\end{remark}
In fact, if $\mathbf{w}(t)=\lambda(t)\mathbf{v}(t)$ (where $\lambda(t) \in \C \setminus\{0\}$), then
$\mathbf{w}'(t_0)=\lambda '(t_0) \mathbf{v}(t_0)+\lambda (t_0) \mathbf{v}'(t_0).$ Hence,
$\mathbf{w}(t_0)\boxtimes \mathbf{w}'(t_0)=
\overline{\lambda(t_0) ^2} \, \,\mathbf{v}(t_0)\boxtimes \mathbf{v}'(t_0).$
Likewise if $\mathbf{w}(t)=\mathbf{v}(f(t))$ for some reparametrization $f:[0,1] \to [0,1]$.

\begin{proposition}\label{invariancebypu21}
If $\gamma : [0,1] \to \partial \Hip ^2 _{\C}$ is a $C^1$ regular knot and $g \in \textrm{PU}(2,1)$ then
$$\mathcal{T}_{g\circ \gamma}(t)=g(\mathcal{T}_{\gamma}(t)).$$
\end{proposition}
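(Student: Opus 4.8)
The plan is to reduce everything to the behavior of the Hermitian cross product under a lift $\mathbf{g}\in\mathrm{SU}(2,1)$ of $g$, using property (iv) of the Hermitian cross product stated in the preliminaries, together with the already-established fact (Remark~\ref{taniswelldef}) that the tangent chain is independent of the chosen lift and parametrization of $\gamma$. The key observation is that the tangent chain $\mathcal{T}_\gamma(t)$ is, by definition, the polar chain to the point $[\mathbf{v}(t)\boxtimes\mathbf{v}'(t)]\in\Proy^2\setminus\Hip^2_\C$, so it suffices to show that the polar point of $\mathcal{T}_{g\circ\gamma}(t)$ equals $g$ applied to the polar point of $\mathcal{T}_\gamma(t)$; since $g$ sends complex lines to complex lines and preserves $\partial\Hip^2_\C$, it carries the polar line (hence the polar chain) of a point $p$ to the polar line (chain) of $g(p)$.

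First I would pick a $C^1$ lift $\mathbf{v}:[0,1]\to\C^{2,1}\setminus\{\mathbf{0}\}$ of $\gamma$ and a lift $\mathbf{g}\in\mathrm{SU}(2,1)$ of $g$ (every element of $\mathrm{PU}(2,1)$ admits such a lift, since $\mathrm{SU}(2,1)\to\mathrm{PU}(2,1)$ is surjective). Then $\mathbf{w}(t):=\mathbf{g}\,\mathbf{v}(t)$ is a $C^1$ lift of $g\circ\gamma$, because $(g\circ\gamma)(t)=[\mathbf{g}\,\mathbf{v}(t)]=[\mathbf{w}(t)]$, and differentiation commutes with the linear map $\mathbf{g}$, so $\mathbf{w}'(t)=\mathbf{g}\,\mathbf{v}'(t)$. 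Applying property (iv) of the Hermitian cross product,
\begin{align*}
\mathbf{w}(t)\boxtimes\mathbf{w}'(t)=(\mathbf{g}\,\mathbf{v}(t))\boxtimes(\mathbf{g}\,\mathbf{v}'(t))=\mathbf{g}\,(\mathbf{v}(t)\boxtimes\mathbf{v}'(t)).
\end{align*}
Projectivizing, the polar point of $\mathcal{T}_{g\circ\gamma}(t)$ is $[\mathbf{g}\,(\mathbf{v}(t)\boxtimes\mathbf{v}'(t))]=g([\mathbf{v}(t)\boxtimes\mathbf{v}'(t)])$, which is exactly $g$ applied to the polar point of $\mathcal{T}_\gamma(t)$.

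To finish, I would invoke the fact from Section~\ref{sec:chains} that $g\in\mathrm{PU}(2,1)$ preserves the Hermitian form up to the relevant identifications, so it sends the polar line of a positive-vector point $p$ to the polar line of $g(p)$ — indeed, if $\mathbf{u}$ is a lift of $p$ and $\mathbf{g}\in\mathrm{SU}(2,1)$, then $\langle\mathbf{g}\mathbf{u},\mathbf{g}\mathbf{z}\rangle_1=\langle\mathbf{u},\mathbf{z}\rangle_1$, so $\mathbf{z}\in\langle\mathbf{u}\rangle^\perp$ iff $\mathbf{g}\mathbf{z}\in\langle\mathbf{g}\mathbf{u}\rangle^\perp$, hence $g(\ell_p)=\ell_{g(p)}$; intersecting with the $g$-invariant set $\partial\Hip^2_\C$ gives $g(\mathcal{C}_p)=\mathcal{C}_{g(p)}$. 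Combining with the computation above yields $\mathcal{T}_{g\circ\gamma}(t)=g(\mathcal{T}_\gamma(t))$. I do not expect any genuine obstacle here; the only point requiring a little care is the choice of a special-unitary lift so that property (iv) applies verbatim (property (iv) is stated only for $\mathrm{SU}(2,1)$, not all of $\mathrm{U}(2,1)$), and the harmless factor that would appear for a general unitary lift is absorbed by projectivization anyway, exactly as in the proof of Remark~\ref{taniswelldef}.
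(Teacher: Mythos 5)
Your proposal is correct and follows essentially the same route as the paper: lift $\gamma$ and $g$, use the equivariance of the Hermitian cross product to show the polar point of $\mathcal{T}_{g\circ\gamma}(t)$ is $g(\gamma(t)\boxtimes\gamma'(t))$, and conclude. The only difference is that you explicitly verify $g(\mathcal{C}_p)=\mathcal{C}_{g(p)}$ and the need for an $\mathrm{SU}(2,1)$ lift, details the paper leaves implicit.
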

\begin{proof}
If $\mathbf{v}:[0,1] \to \C^{2,1}$ is a $C^1$ lift for $\gamma$ and 
$g=[\mathbf{g}] \in \textrm{PU}(2,1)$ then
$\mathbf{g} \mathbf{v}: [0,1] \to \C ^{2,1}$ is a $C^1$ lift for $g\circ \gamma$ and
$$g\circ \gamma (t) \boxtimes (g \circ \gamma)'(t)=[\mathbf{g v}(t) \boxtimes (\mathbf{g v})'(t) ]=
[\mathbf{g}(\mathbf{v}(t)\boxtimes \mathbf{v}'(t))]=g(\gamma(t) \boxtimes \gamma '(t)).$$
Hence, $\mathcal{T}_{g \circ \gamma}(t)$ is the polar chain to the point $g(\gamma(t)\boxtimes \gamma'(t))$.
Since $g(\mathcal{T}_{\gamma}(t))$ is the polar chain to the same point, 
we conclude that $\mathcal{T}_{g\circ \gamma}(t)=g(\mathcal{T}_{\gamma}(t)).$
\end{proof}

%

\begin{proposition}\label{prop:legendrianknot}
If $\gamma : [0,1] \to \partial \Hip ^2 _{\C}$ is a $C^1$ regular knot then the following
are equivalent
\begin{enumerate}
\item The curve $\gamma$ is Legendrian at $t$.
\item The point $\gamma(t)\boxtimes \gamma ' (t)$ lies in $\partial \Hip ^2 _{\C}$.
\item The tangent chain $\mathcal{T}_{\gamma}(t)$ is degenerate. 
\end{enumerate} 
\end{proposition}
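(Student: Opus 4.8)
The plan is to establish the equivalences by a direct computation in the Siegel model, together with the invariance under $\mathrm{PU}(2,1)$ already granted by Proposition~\ref{invariancebypu21}. First observe that the equivalence of (ii) and (iii) is immediate from the dictionary between chains and their polar points: the tangent chain $\mathcal{T}_\gamma(t)$ is degenerate precisely when its polar point $\gamma(t)\boxtimes\gamma'(t)$ lies on $\partial\Hip^2_\C$ rather than in $\Proy^2\setminus\overline{\Hip^2_\C}$ (recall that a degenerate chain is a single point of $\partial\Hip^2_\C$, identified with itself as polar point). So the content is the equivalence of (i) with (ii).

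For that, I would compute $\langle \mathbf{v}(t)\boxtimes\mathbf{v}'(t),\, \mathbf{v}(t)\boxtimes\mathbf{v}'(t)\rangle$ in terms of the given data. Since the Hermitian cross product $\boxtimes$ (with respect to either form) behaves like the real cross product, there should be a Lagrange-type identity of the shape
\begin{align*}
\langle \mathbf{z}\boxtimes\mathbf{w},\, \mathbf{z}\boxtimes\mathbf{w}\rangle
= \overline{\bigl(\langle\mathbf{z},\mathbf{z}\rangle\langle\mathbf{w},\mathbf{w}\rangle - |\langle\mathbf{z},\mathbf{w}\rangle|^2\bigr)}
\end{align*}
(possibly up to an overall sign or conjugation, which does not affect whether the quantity is zero, positive or negative). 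Apply this with $\mathbf{z}=\mathbf{v}(t)$, $\mathbf{w}=\mathbf{v}'(t)$. Because $\gamma(t)\in\partial\Hip^2_\C$ we have $\langle\mathbf{v}(t),\mathbf{v}(t)\rangle=0$, so the right-hand side collapses to $-|\langle\mathbf{v}(t),\mathbf{v}'(t)\rangle|^2$. Hence $\gamma(t)\boxtimes\gamma'(t)$ lies on $\partial\Hip^2_\C$ if and only if $\langle\mathbf{v}(t),\mathbf{v}'(t)\rangle=0$, and it is a positive vector otherwise; in particular it never lies in $V_-$, consistent with the tangent chain always being a genuine (possibly degenerate) chain. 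So statement (ii) is equivalent to the condition $\langle\mathbf{v}(t),\mathbf{v}'(t)\rangle=0$.

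It then remains to identify $\langle\mathbf{v}(t),\mathbf{v}'(t)\rangle=0$ with the Legendrian condition at $t$. Here I would pass to Heisenberg coordinates: writing a point of $\partial\Hip^2_\C\setminus\{\infty\}$ (with respect to $\langle\cdot,\cdot\rangle_2$) as the lift $\mathbf{v}=(-|\zeta|^2+iv,\ \sqrt{2}\,\zeta,\ 1)$ for $(\zeta,v)\in\mathcal{H}$, differentiate to get $\mathbf{v}'=(-2\,\mathrm{Re}(\overline\zeta\dot\zeta)+i\dot v,\ \sqrt{2}\,\dot\zeta,\ 0)$ and compute $\langle\mathbf{v},\mathbf{v}'\rangle_2$ directly from the definition of the second Hermitian form. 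A short calculation should yield something proportional to $i\dot v + (\text{terms in }\zeta,\dot\zeta)$ whose vanishing is exactly $\dot v - \eta(\zeta,\dot\zeta)=0$, i.e. the defining equation of a Legendrian curve recalled in Section~\ref{sec:real-circles}. Since every point of $\partial\Hip^2_\C$ can be moved to a finite Heisenberg point by an element of $\mathrm{PU}(2,1)$, and both the Legendrian condition (the contact structure is $\mathrm{PU}(2,1)$-invariant) and condition (ii) (by Proposition~\ref{invariancebypu21}) are $\mathrm{PU}(2,1)$-equivariant, this local computation at a finite point suffices to conclude the equivalence everywhere.

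The main obstacle I anticipate is purely bookkeeping: getting the Lagrange identity for $\boxtimes$ with the correct conjugations and signs, and matching the normalization of the Heisenberg lift so that the imaginary part in $\langle\mathbf{v},\mathbf{v}'\rangle_2$ lines up exactly with $\dot v-\eta(\zeta,\dot\zeta)$ rather than some nonzero multiple. Neither is conceptually deep, but the conjugate-linearity in the first slot of $\langle\cdot,\cdot\rangle_1$ versus the conventions in the determinant definition of $\boxtimes$ make it easy to drop a bar; I would double-check the identity on a simple example (two coordinate vectors) before trusting it.
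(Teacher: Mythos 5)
Your proposal is correct and follows essentially the same route as the paper: the Lagrange-type identity $\langle \mathbf{v}\boxtimes\mathbf{v}',\mathbf{v}\boxtimes\mathbf{v}'\rangle=\langle\mathbf{v}',\mathbf{v}\rangle\langle\mathbf{v},\mathbf{v}'\rangle-\langle\mathbf{v}',\mathbf{v}'\rangle\langle\mathbf{v},\mathbf{v}\rangle$ combined with $\langle\mathbf{v}(t),\mathbf{v}(t)\rangle=0$ reduces (ii) to $\langle\mathbf{v}(t),\mathbf{v}'(t)\rangle=0$, which is identified with the Legendrian condition, and (ii)$\Leftrightarrow$(iii) is definitional. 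The only difference is that you spell out in Heisenberg coordinates the equivalence of $\langle\mathbf{v},\mathbf{v}'\rangle=0$ with the contact condition, which the paper simply asserts, and your hedging on the sign of the identity is harmless since only the vanishing is used (with the correct sign the polar point is indeed positive or null, consistent with the paper).
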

\begin{proof}
First, 
if the curve $\mathbf{v}:[0,1] \to \C^{2,1}$ is a $C^1$ lift for $\gamma$, then 
the equation 
$$\langle \mathbf{v}(t)\boxtimes  \mathbf{v}'(t), \mathbf{v}(t)\boxtimes \mathbf{v}'(t)\rangle =
\langle \mathbf{v}'(t), \mathbf{v}(t)\rangle \langle \mathbf{v}(t), \mathbf{v}'(t)\rangle
- \langle \mathbf{v}'(t), \mathbf{v}'(t)\rangle \langle \mathbf{v}(t), \mathbf{v}(t)\rangle$$
proves the equivalence of the two equations 
$$\langle \mathbf{v}(t), \mathbf{v}'(t) \rangle=0 \quad 
\textrm{ and } \quad 
\langle \mathbf{v}(t)\boxtimes  \mathbf{v}'(t), \mathbf{v}(t)\boxtimes \mathbf{v}'(t)\rangle =0.$$
The first equation is equivalent to i) and the second one is equivalent to ii).

Finally, the equivalence of ii) and iii) follows by the definitions of $\mathcal{T}_{\gamma}(t)$ and degenerate chain.
\end{proof}

\section{Proof of the Theorem}\label{sec:proof}

\begin{lemma}\label{itislegendrian}
Let $G \subset \textrm{PU}(2,1)$ be a discrete subgroup acting 
on the complex hyperbolic space $\Hip ^2 _{\C}$. 
If the limit set $L(G) \subset \partial \Hip ^2 _{\C}$ is the image of a $C^1$ regular knot, $\gamma:[0,1] \to \partial\Hip^2_\C$, then $\gamma$ is Legendrian or a chain.
\end{lemma}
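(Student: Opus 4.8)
The idea is to exploit the dynamics of a loxodromic element $g \in G$, which exists by property (iv) of the limit set. Fix such a $g$ with attracting fixed point $p_+$ and repelling fixed point $p_-$, both in $L(G) = \gamma(\Sp^1)$. Since $L(G)$ is $G$-invariant and minimal, the orbit of any point of $L(G)$ is dense; in particular the iterates $g^n$ act on the knot, and I will study what constraints the attracting/repelling behavior of $g^n$ imposes on the tangent chain $\mathcal{T}_\gamma(t_0)$ at the parameter $t_0$ with $\gamma(t_0) = p_+$. Because $g$ fixes $p_+$ and $g$ maps the curve $\gamma$ to a reparametrization of $\gamma$ (as both have image $L(G)$ — here one should be slightly careful and argue that $g$ permutes the curve, or restrict attention to the local picture near $p_+$), Proposition~\ref{invariancebypu21} gives $g(\mathcal{T}_\gamma(t_0)) = \mathcal{T}_{g\circ\gamma}(t_0)$, and I want to conclude that $\mathcal{T}_\gamma(t_0)$ is either $g$-invariant (a chain through $p_+$) or forced to degenerate.

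First I would set up Heisenberg coordinates so that $p_+ = \infty$ and $p_- = 0$, so that $g$ becomes (up to a rotational factor) a Heisenberg dilation $(\zeta,v)\mapsto (\lambda\zeta,\lambda^2 v)$ with $|\lambda|>1$ fixing $0$ and $\infty$. The knot $\gamma$ passes through $\infty$; consider the portion of $\gamma$ near $0 = p_-$. Under $g^{-n}$, points of $L(G)$ near $p_-$ are pushed toward $p_+$, while the dilation near $0$ scales the curve by $\lambda^{-n}$. The key computation is to track the tangent direction $\dot\gamma(s)$ at a point $\gamma(s)$ near $0$: under the dilation, a tangent vector $(\dot\zeta,\dot v)$ at height $v$ small scales anisotropically, so that in the limit the rescaled curves converge and the limiting tangent information at $0$ is governed by whether the curve is Legendrian there. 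Concretely, I would compute $\gamma(s)\boxtimes\gamma'(s)$ in these coordinates and show that the $g$-action on this point in $\Proy^2\setminus\Hip^2_\C$ has, as its only bounded orbits (needed because everything must stay inside the compact set of chains meeting $L(G)$, or rather because of the density/minimality of orbits), either fixed points — giving a chain invariant under $g$ — or points on $\partial\Hip^2_\C$ — giving, by Proposition~\ref{prop:legendrianknot}, that $\gamma$ is Legendrian at that parameter.

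To promote "Legendrian at one point" to "Legendrian everywhere" (or a chain), I would use density of the $G$-orbit of that point in $L(G)$ together with $g$-equivariance of the tangent-chain map and continuity of $t\mapsto \gamma(t)\boxtimes\gamma'(t)$: the set of parameters where $\gamma$ is Legendrian is closed (it is the preimage of $\partial\Hip^2_\C$ under a continuous map, by Proposition~\ref{prop:legendrianknot}(ii)) and, I claim, $G$-invariant in the appropriate sense, hence if nonempty and the curve is not a chain it must be all of $[0,1]$. The alternative — that $\mathcal{T}_\gamma(t_0)$ is a nondegenerate chain fixed by the loxodromic $g$ — should force, again by minimality of $L(G)$ and invariance, that $L(G)$ is contained in that chain, hence equals it, so $\gamma$ is a chain.

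**Main obstacle.** The delicate point is the dynamical/limiting argument near the fixed point: controlling the anisotropic scaling of the $C^1$ data of $\gamma$ under $g^n$ and showing the dichotomy (fixed chain versus degeneration) is forced, rather than, say, the tangent chain wandering. One must use that $C^1$-regularity gives a genuine nonzero tangent direction and that the image is exactly $L(G)$ (not just contained in it), so that the minimality and density properties apply; handling the reparametrization ambiguity (the map $g$ need not fix the parameter $t_0$, only the point $p_+$) and making sure Proposition~\ref{invariancebypu21} is applied to the correct parametrization is where care is needed.
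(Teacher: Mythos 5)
Your overall strategy --- produce one point where the tangent chain degenerates via loxodromic dynamics and Proposition~\ref{invariancebypu21}, then propagate Legendrian-ness to all of $L(G)$ by density of orbits and closedness (Proposition~\ref{prop:legendrianknot}) --- is in the same spirit as the paper's, and the propagation step is fine. But the core dynamical step has a genuine gap. You anchor the argument at the attracting fixed point $p_+=\gamma(t_0)$ of a fixed loxodromic $g$. At that point the tangent chain is \emph{automatically} $g$-invariant (the image of $\gamma$ is $g$-invariant and the tangent chain depends only on the image and the point, by Remark~\ref{taniswelldef} and Proposition~\ref{invariancebypu21}), so no contraction or ``bounded orbit'' dichotomy can be extracted there: the alternative that genuinely remains open is that $\mathcal{T}_\gamma(t_0)$ is a nondegenerate $g$-invariant chain, necessarily the chain through both fixed points of $g$. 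Your way of excluding it --- ``minimality of $L(G)$ forces $L(G)$ to be contained in that chain'' --- does not follow: that chain is only $g$-invariant, not $G$-invariant, so minimality of $L(G)$ says nothing about it, and tangency of $\gamma$ to a chain at one point gives no containment whatsoever. Moreover, the ``key computation'' in Heisenberg coordinates that is supposed to produce the dichotomy is only announced, never carried out.

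The paper's proof avoids exactly this trap by not working at the fixed point of a prechosen $g$. Assuming $\gamma$ is not a chain and not Legendrian at some $s_0$, the chain $\mathcal{T}_\gamma(s_0)$ is nondegenerate and, since the knot is not contained in it, misses some arc of $\gamma$; one then chooses a loxodromic $g\in G$ whose \emph{two fixed points lie in that arc}, so that $\mathcal{T}_\gamma(s_0)$ avoids both fixed points and the iterates $g^n(\mathcal{T}_\gamma(s_0))$ shrink to the attracting fixed point $\gamma(s_p)$. Writing $g^n(\mathcal{T}_\gamma(s_0))=\mathcal{T}_{g^n\circ\gamma}(s_0)=\mathcal{T}_\gamma(s_n)$ with $s_n\to s_p$, continuity of $t\mapsto\gamma(t)\boxtimes\gamma'(t)$ forces $\mathcal{T}_\gamma(s_p)$ to be degenerate, i.e.\ $\gamma$ is Legendrian at $s_p$, and the propagation step gives the contradiction. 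To repair your plan you would need either this adapted choice of loxodromic (chosen after, and relative to, the non-Legendrian point), or an actual argument ruling out the case in which the tangent chain at $p_+$ coincides with the axis chain of $g$ while $\gamma$ is neither a chain nor Legendrian; as written, that case is untouched.
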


\begin{proof}
First, we claim that if $\gamma$ is Legendrian at $t_0 \in [0,1]$ 
 then $\gamma$ is Legendrian at every $t \in [0,1]$. 
In fact, 
$\gamma$ is Legendrian at every $t$ such that $\gamma(t)$ lies in the 
$G$-orbit of the point $\gamma(t _0)$, because the image of $\gamma$ 
is $G$-invariant and $G\subset \textrm{PU}(2,1)$ acts by contact morphisms.
For an arbitrary $t \in [0,1]$, there is a sequence of points
$t_n \to t$ as $n \to \infty$ such that $\gamma$ is Legendrian at $t_n$ for every $n\in \mathbb{N}$,
because the $G$-orbit of $\gamma(t_0)$ is dense in the image of $\gamma$. 
Hence $\gamma(t _n) \boxtimes \gamma'(t _n) \in \partial \Hip ^2 _{\C}$ 
for every $n \in \mathbb{N}$ and it implies that 
$\gamma(t) \boxtimes \gamma '(t) \in \partial \Hip^2 _{\C}$. By Proposition~\ref{prop:legendrianknot} 
$\gamma$ is Legendrian at $t$ and we have proved our claim. 

If the image of the knot is not a chain, 
we proceed by contradiction and we assume that $\gamma$ is not Legendrian, then it is not Legendrian at
some $s_0 \in [0,1]$. It follows that 
$\mathcal{T}_{\gamma}(s_0)$ 
is not degenerate by Proposition~\ref{prop:legendrianknot}. Since the image of $\gamma$ is not a chain, there exists an arc of $\gamma$ disjoint from 
 $\mathcal{T}_{\gamma}(s_0)$. Moreover, there exists a loxodromic element $g\in G$ such that
its fixed points $p,q$ lie in this arc. 
By Proposition \ref{invariancebypu21}, we have that
$g^n(\mathcal{T}_{\gamma}(s_0))=\mathcal{T}_{g^n \circ \gamma}(s_0)$. 
If $p=\gamma(s_p)$ is the attracting fixed point for $g$, then there exists a sequence
 $(s_n) \subset [0,1]$ such that 
$\mathcal{T}_{g^n \circ \gamma}(s_0)=\mathcal{T}_{\gamma}(s_n)$
for every $n \in \mathbb{N}$, and $\lim\limits _{n\to \infty} s_n = s_p$.
It follows 
that the diameter of the chain $g^n(\mathcal{T}_{\gamma}(s_0))=\mathcal{T}_{\gamma}(s_n)$
goes to zero as $n \to \infty$. Hence $\mathcal{T}_{\gamma}(s_p)$ is degenerate, or equivalently by Proposition~\ref{prop:legendrianknot},  
$\gamma$ is Legendrian at $s_p$.
Therefore, $\gamma$ is Legendrian at any $s \in [0,1]$, which is a contradiction.
\end{proof}

\begin{lemma}\label{lem:embedded-plane-curve}
Let $I$ be an open interval such that 
$\gamma: I \to \C$ is an embedded plane curve of class $C^2$ 
invariant under the homothety $g(z) = \lambda\,z$, for 
some $\lambda \in \C$, $|\lambda| < 1$, then 
$\gamma$ is a straight line segment.
\end{lemma}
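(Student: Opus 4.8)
The plan is to exploit the self-similarity of $\gamma$ under the contracting homothety $g(z)=\lambda z$ to force the curve to be linear near the fixed point $0$, and then propagate this to the whole of $\gamma(I)$. First I would observe that $0$ lies in the closure of $\gamma(I)$: since $\gamma(I)$ is $g$-invariant and $g$ contracts toward $0$, iterating $g$ on any point of $\gamma(I)$ produces a sequence converging to $0$, so $0\in\overline{\gamma(I)}$. After reparametrizing we may assume there is a sequence of parameters $t_n$ with $\gamma(t_n)\to 0$; because $\gamma$ is $C^2$ and embedded (hence regular), near such an accumulation the image is well-controlled. The key point is that $g$ maps the piece of $\gamma$ near $0$ to a smaller piece near $0$, scaled by $\lambda$ and rotated by $\arg\lambda$, and it does so respecting the curve. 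Writing the curve near $0$ in a graph form over its tangent direction at $0$, say $\gamma$ locally looks like $\{x + i\,\phi(x)\}$ with $\phi(0)=\phi'(0)=0$ and $\phi\in C^2$, the homothety-invariance becomes a functional equation relating $\phi$ at scale $|\lambda|^n$ to $\phi$ at scale $1$.

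Concretely, I would set up the argument as follows. Pick a small arc $J\subset\gamma(I)$ whose closure contains $0$ as an endpoint (obtained from the limit $\gamma(t_n)\to 0$ together with regularity, which guarantees a genuine sub-arc abutting $0$). By invariance, $g^n(J)\subset\gamma(I)$ for all $n\ge 0$, and these arcs shrink to $\{0\}$. The tangent line to $\gamma$ at the accumulation point $0$ — which exists because $\gamma$ is $C^1$ and, via the $C^2$ hypothesis, the tangent direction varies continuously — must be $g$-invariant as a line through $0$, i.e. invariant under multiplication by $\lambda$; this is automatic for any line through $0$ only when $\lambda$ is real, so when $\lambda\notin\reals$ the tangent direction is forced and unique. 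In either case, after a rotation of $\C$ we may take this tangent line to be $\reals$, and then near $0$ we have the graph representation $\gamma(s)=(s,\phi(s))$ with $\phi(0)=0$, $\phi'(0)=0$. The homothety sends $(s,\phi(s))$ to $\lambda(s,\phi(s))$; comparing with the graph parametrization of $g(\gamma)$ and using that $g(\gamma)\subset\gamma$ yields, in the case $\lambda=|\lambda|=:\mu\in(0,1)$, the identity $\phi(\mu s)=\mu\,\phi(s)$ for $s$ in a neighbourhood of $0$. Differentiating twice gives $\phi''(\mu s)=\mu^{-1}\phi''(s)$; iterating, $\phi''(0)=\mu^{-n}\phi''(\mu^{-n}\cdot 0)$ is not quite the right substitution — instead evaluate at a fixed $s_0$: $\phi''(\mu^n s_0)=\mu^{-n}\phi''(s_0)$, and letting $n\to\infty$ the left side tends to $\phi''(0)$, which is finite, forcing $\phi''(s_0)=0$. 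Hence $\phi''\equiv 0$ on the neighbourhood, so $\phi$ is affine, and with $\phi(0)=\phi'(0)=0$ we get $\phi\equiv 0$: $\gamma$ is a straight line segment near $0$.

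To finish, I would upgrade "straight near $0$" to "straight on all of $I$". Any point $\gamma(t)\in\gamma(I)$ has, by the contraction, a forward orbit $g^n(\gamma(t))\to 0$ lying in $\gamma(I)$, so some $g^N(\gamma(t))$ lands in the neighbourhood of $0$ where $\gamma$ is already known to be a line segment through $0$ along $\reals$; pulling back by $g^{-N}$ (a homothety, hence line-preserving and fixing $0$) shows the component of $\gamma(I)$ through $\gamma(t)$ also lies on the line $\reals\cdot\lambda^{-N}=\reals$ (for real $\lambda$) through $0$. Since $\gamma(I)$ is connected and each of its points lies on this single line through $0$, the whole image is contained in that line; being an embedded $C^2$ curve in a line, it is a straight line segment. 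For the genuinely complex case $\lambda\notin\reals$, the same computation with $\lambda=\mu e^{i\theta}$, $\theta\notin\pi\mathbb{Z}$, gives $\phi(\mu s)=\mu\,\phi(s)$ after absorbing the rotation into the choice of tangent line, and the conclusion is identical; the rotation only makes the tangent-line normalization cleaner, since it pins down the tangent direction uniquely.

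The main obstacle I anticipate is the passage from the limiting behaviour at $0$ to a clean graph parametrization on a full neighbourhood: one must be careful that the arcs $g^n(J)$ fit together with $J$ to cover a one-sided (or two-sided) neighbourhood of $0$ in $\gamma(I)$ without overlap or gaps, using the embeddedness and regularity of $\gamma$, and that the functional equation $\phi(\mu s)=\mu\phi(s)$ genuinely holds on an interval rather than just on the discrete scales $\mu^n s_0$. Handling the rotational part when $\lambda\notin\reals$ — ensuring the rotated arcs still assemble into a graph over a fixed direction — is the technical heart; once the functional equation is in hand on an interval, the conclusion $\phi''\equiv 0$ is immediate from the scaling identity and the continuity of $\phi''$ at $0$.
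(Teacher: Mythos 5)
Your underlying mechanism is the same as the paper's: a second-order quantity along the curve scales like $|\lambda|^{-n}$ under iteration of $g$, while the iterates converge to the fixed point where that quantity must stay bounded, forcing it to vanish. The paper runs this directly on the curvature $k$ (which is rotation- and parametrization-invariant): $k(g^n(z_0))=|\lambda|^{-n}k(z_0)$ and $g^n(z_0)\to 0$, so $k\equiv 0$. Your chart-based version of this, however, has two genuine gaps. First, the graph representation at $0$ is not justified. From $0\in\overline{\gamma(I)}$ and $\gamma(t_n)\to 0$ you cannot extract a sub-arc ``abutting $0$'' with a limiting tangent line: continuity of the tangent direction on $I$ gives no limit at a point that is merely an accumulation point of the image. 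Worse, with $0$ only in the closure the conclusion itself fails: a logarithmic spiral $t\mapsto e^{(a+i)t}$ is an embedded, analytic curve invariant under $z\mapsto\lambda z$ for suitable non-real $\lambda$ with $|\lambda|<1$, has no tangent at $0$, and is not a line; non-straight invariant examples accumulating at $0$ exist for real $\lambda$ too (glue a smooth wiggly arc from $z_0$ to $\lambda z_0$ and take its $g$-orbit). What makes the argument work in the intended application (Lemma~\ref{lem:r-circle}) is that $0=\psi(0)$ is an actual point of the curve, so the tangent line and the local graph chart at $0$ (and, in the paper's version, continuity of $k$ at $0$) come from $C^1$/$C^2$ regularity at that parameter. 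You must use $0\in\gamma(I)$; it cannot be derived, and your attempt to manufacture a tangent at a mere accumulation point is the step that fails.

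Second, your handling of non-real $\lambda$ does not work. Multiplication by $\lambda=\mu e^{i\theta}$ with $\theta\notin\pi\mathbb{Z}$ preserves no real line through $0$, so the rotation cannot be ``absorbed into the choice of tangent line'': $g\circ\gamma$ is then a graph over the rotated line $\lambda L$, not over $L$, and the functional equation $\phi(\mu s)=\mu\,\phi(s)$ simply does not follow. The correct observation, once $0\in\gamma(I)$, is that invariance forces the tangent line at $0$ to be $\lambda$-invariant, hence $\lambda\in\reals$, so the rotational case is vacuous; alternatively one works with the curvature, which is insensitive to the rotation — this is exactly what the paper's short proof buys and why it treats all $\lambda$ uniformly. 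With those two repairs (assume $0\in\gamma(I)$, and either dispose of non-real $\lambda$ as above or switch to curvature), your real-$\lambda$ computation $\phi''(\mu s)=\mu^{-1}\phi''(s)$, the limit $\phi''\equiv 0$ near $0$, and the propagation of straightness by $g^{-N}$ are sound, but as written the proposal is incomplete at precisely its two load-bearing points.
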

\begin{proof}
Let $k(z)$ be the the curvature of $\gamma$ at a point $z = \gamma(t)$, 
$k$ is a geometric invariant of $\gamma(I)$, independent of the 
parametrization chosen. Let $z_0 = \gamma(a)$ be any point of the curve, then 
the curvature at the $n$-th iterate $z_{n} = g^{n}\circ \gamma(a)$ 
is $k(z_{n}) = |\lambda|^{-n}\,k(z_{0})$, but $z_n \to 0$ as $n \to \infty$ 
whereas $k(z_{n}) \to \infty$ unless $k(z_0) = 0$, since $z_0$ was arbitrary, this 
means that $\gamma$ has constant curvature $k \equiv 0$, hence it is a 
straight line segment.
\end{proof}
\begin{lemma}\label{lem:r-circle}
If $\gamma : [0,1] \to \partial\Hip^2_\C$ is a $C^2$ regular Legendrian knot 
and there is a loxodromic element $g \in \pu (2,1)$ such that ${\gamma}$ is 
$g$-invariant and contains the attracting and repelling fixed points of $g$, then 
$\gamma$ is an $\reals$-circle.
\end{lemma}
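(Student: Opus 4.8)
The plan is to reduce to Lemma \ref{lem:embedded-plane-curve} by passing to a suitable Heisenberg coordinate system centered at the fixed points of $g$. First I would choose Heisenberg coordinates so that the repelling fixed point of $g$ becomes the point at infinity and the attracting fixed point becomes the origin of $\mathcal{H} = \C \times \reals$. In such coordinates a loxodromic element with these fixed points acts as $(\zeta, v) \mapsto (\mu\,\zeta, |\mu|^2 v)$ for some $\mu \in \C$ with $|\mu| < 1$ (a complex dilation in the Heisenberg group), possibly composed with a rotation that also fixes $0$ and $\infty$; in any case its action on the horizontal $\C$-factor is a linear contraction $\zeta \mapsto \mu\,\zeta$. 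Since $\gamma$ is $g$-invariant and Legendrian, by the discussion in Section \ref{sec:real-circles} the knot is determined by its vertical projection $\zeta = \pi \circ \gamma$, and the condition $\dot v = \eta(\zeta, \dot\zeta)$ holds everywhere.

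Next I would examine the vertical projection $\pi \circ \gamma$ near the origin. Because $\gamma$ passes through both fixed points, removing the repelling fixed point leaves an arc that is a $C^2$ embedded curve in $\mathcal{H}$ accumulating at the origin, and its horizontal projection is a $C^2$ plane curve in $\C$ that is invariant under the homothety $\zeta \mapsto \mu\,\zeta$ (after quotienting out the rotational part, or by working with $g^k$ for a suitable $k$ if the rotation has finite order; in general one argues directly that the image of the projected curve is preserved by the contraction together with a rotation, and a rotation-plus-contraction fixing the image of an embedded arc through $0$ still forces vanishing curvature by the same scaling argument as in Lemma \ref{lem:embedded-plane-curve}). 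Here one must be slightly careful: Lemma \ref{lem:embedded-plane-curve} is stated for a pure homothety, so I would either note that the curvature-scaling argument is insensitive to composing with a rotation, or reduce to the pure-homothety case by absorbing the rotation. Once vanishing curvature is established, the projection $\pi \circ \gamma$ is a straight line segment through the origin, hence of the form $\zeta(s) = c(s)\,e^{i\theta_0}$ with $c(s) \in \reals$ and $\theta_0$ fixed.

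Finally I would recover $v$ from the Legendrian condition: along a straight line through the origin $\eta(\zeta, \dot\zeta) = \mathrm{Im}(\overline{\zeta}\,\dot\zeta) = 0$ because $\zeta$ and $\dot\zeta$ are real multiples of the same fixed complex number, so $\dot v \equiv 0$ and $v$ is constant; since the curve passes through the origin $v \equiv 0$. Therefore $\gamma$ traces out the horizontal line $\{(t e^{i\theta_0}, 0) : t \in \reals\}$ in these Heisenberg coordinates, which is exactly an infinite $\reals$-circle through the origin as described in Section \ref{sec:real-circles}; undoing the coordinate change, $\gamma$ is an $\reals$-circle. The main obstacle I anticipate is the bookkeeping in the first and second steps: verifying the normal form of a loxodromic element in Heisenberg coordinates and correctly handling the possible rotational part so that Lemma \ref{lem:embedded-plane-curve} (or its rotation-tolerant variant) genuinely applies to the horizontal projection of the arc obtained by deleting the point at infinity.
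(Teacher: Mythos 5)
Your proposal follows essentially the same route as the paper's proof: normalize Heisenberg coordinates so the attracting and repelling fixed points are $0$ and $\infty$, project the Legendrian curve vertically, apply the curvature-scaling Lemma~\ref{lem:embedded-plane-curve} to get a straight line through the origin, and use the Legendrian condition to conclude the curve is a horizontal line, hence an infinite $\reals$-circle. Your concern about the rotational part is unnecessary, since Lemma~\ref{lem:embedded-plane-curve} is already stated for complex $\lambda$; the only bookkeeping the paper adds that you should too is securing the lemma's embeddedness hypothesis locally (regularity plus the Legendrian condition give $\psi'(0)\neq 0$, then the inverse function theorem), and then spreading the resulting horizontal segment over the whole curve by the iterates $g^{-n}$ and connectedness.
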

\begin{proof}
Let $p,q$ be the attracting and repelling fixed points of $g$ respectively. 
We can choose Heisenberg coordinates for $\partial\Hip_\C^2 \setminus{\{q\}}$ 
such that $p$ is the origin in $\Heis$ and $q$ is the point at infinity. Under 
these coordinates,  $g$ has the representation 
\begin{equation*}
    g(\zeta,v) = (\lambda^{1/2}\zeta, |\lambda|\,v),
\end{equation*}
where $|\lambda| < 1$ since the attracting fixed point is at the origin. Identifying $\gamma$ with its image under Heisenberg coordinates, we can suppose it is a curve 
$\reals \to \Heis$ joining the origin with the point at infinity, such that $\gamma(0) = \vb 0$. Let $\psi: \reals \to \C$ be the vertical projection of $\gamma$, since 
$\gamma$ is Legendrian, $\psi'(0) \neq 0$, hence by the inverse function theorem,
there is an open interval $I$ centered at $0$ such that the restriction $\psi|_{I}$ is embedded 
in the complex plane at $v=0$. 
By the Lemma~\ref{lem:embedded-plane-curve}, $\psi|_{I}$ is a 
straight line segment, since $\gamma$ is tangent to the contact structure, this means that 
$\gamma(I)$ is this segment. Also, $\gamma$ is $g$-invariant and connected, hence  
$\gamma(\reals) = \cup_{n \in \mathbb{N}} g^{-n}\circ\gamma\left({I}\right)$ is a horizontal 
line and therefore it is an $\reals$-circle.
\end{proof}

\begin{proof}
    [Proof of the Theorem~\ref{main}]
    By the Lemma~\ref{itislegendrian}, if $L(G)$ is not a chain, then it is a 
    Legendrian curve in $\partial\Hip^2_\C$. We know that in any complex Kleinian 
    group $G \subset \textrm{PU}(2,1)$, there is a loxodromic element $g$, since 
    $L(G)$ is $g$-invariant and contains both the attracting and repelling fixed points of 
    $g$, by the Lemma~\ref{lem:r-circle} it is an $\reals$-circle.
\end{proof}
	\textbf{Acknowledgements.} The research of W. Barrera, R. Garc\'ia and J. P. Navarrete
  has been supported by the CONACYT, Proyecto Ciencia de Frontera
  2019--21100 via the Faculty of Mathematics, UADY.

	\bibliography{main.bib}

\begin{thebibliography}{10}

\bibitem{Ahl}
Lars Ahlfors.
\newblock Mobius transformations in several dimensions.
\newblock {\em Lecture Notes at University of Minnesota}, 1981.

\bibitem{ChG}
Shyan~S Chen and Leon Greenberg.
\newblock Hyperbolic spaces.
\newblock In {\em Contributions to analysis}, pages 49--87. Elsevier, 1974.

\bibitem{Go}
William~Mark Goldman.
\newblock {\em Complex hyperbolic geometry}.
\newblock Oxford University Press, 1999.

\bibitem{kamiya}
Shigeyasu Kamiya.
\newblock Notes on elements of \textrm{U(1,~n;~\textbf{C})}.
\newblock {\em Hiroshima Mathematical Journal}, 21(1):23--45, 1991.

\bibitem{Ka}
Michael Kapovich.
\newblock {\em Topological Aspects of Kleinian Groups in Higher Dimensions}.
\newblock MPI, 2006.

\bibitem{Ku}
RS~Kulkarni.
\newblock Infinite regular coverings.
\newblock {\em Duke Mathematical Journal}, 45(4):781--796, 1978.

\bibitem{Miner}
Robert~R Miner.
\newblock Transverse curves and chain curvature in the heisenberg group.
\newblock {\em The Journal of Geometric Analysis}, 9(1):143--160, 1999.

\bibitem{parker}
John~R Parker.
\newblock Notes on complex hyperbolic geometry.
\newblock {\em preprint}, 2003.

\bibitem{Ra}
John~G Ratcliffe, S~Axler, and KA~Ribet.
\newblock {\em Foundations of hyperbolic manifolds}, volume 149.
\newblock Springer, 1994.

\bibitem{Tu}
Pekka Tukia.
\newblock A survey of m{\"o}bius groups.
\newblock In {\em Proceedings of the International Congress of Mathematicians},
  pages 907--916. Springer, 1995.

\bibitem{yue}
Chengbo Yue.
\newblock Dimension and rigidity of quasi-fuchsian representations.
\newblock {\em Annals of mathematics}, pages 331--355, 1996.

\end{thebibliography}
	\bibliographystyle{plain}
\end{document}